\documentclass[twoside,a4paper,12pt]{amsart}
\usepackage{amsfonts, amsbsy, amsmath, amsthm, amssymb, latexsym}
\usepackage{mathrsfs, mathabx}
\usepackage[top=34mm,right=20mm,bottom=34mm,left=20mm]{geometry}
\usepackage{bm}
\usepackage{fancyhdr}
\usepackage{enumerate}
\usepackage{tikz, tikz-cd, tkz-euclide}
\usetikzlibrary{decorations.markings,positioning}
\usepackage{xcolor}
\usepackage{faktor}

\usepackage{hyperref}

\usepackage[official]{eurosym}
\usepackage{array,float,graphicx,epic,eepic}
\usepackage{amssymb, amsmath, amsthm}
\usepackage{changepage}
\usepackage{comment}
\usepackage{thmtools,thm-restate}

\usepackage{url}

\def \G {\Gamma}

\def \Aut {\text{Aut}}

\headheight=14pt

\parskip 1mm




\renewcommand{\d}{\delta}

\newcommand{\R}{\mathbb{R}}
\newcommand{\Z}{\mathbb{Z}}

 \newcommand{\e}{\varepsilon}

 \renewcommand{\to}{\rightarrow}

\newcommand{\Q}{\mathbb{Q}}

\newtheorem{thm}{Theorem}[section]
\newtheorem{prop}[thm]{Proposition}
\newtheorem{lem}[thm]{Lemma}
\newtheorem{cor}[thm]{Corollary}

\theoremstyle{definition}
\newtheorem{defn}[thm]{Definition}
\newtheorem*{rem}{Remark}
\newtheorem*{quest}{Questions}

\newtheorem{exs}[thm]{Examples}

\title{Length functions on groups and actions on graphs}

\author{M. Collins and A. Martino}

\address{Mathematical Sciences, Building 54, University of Southampton, Southampton, SO17 1BJ}
\email{mpc1g15@soton.ac.uk}
\email{A.Martino@soton.ac.uk}

\date{}

\begin{document}

\begin{abstract}
	We study generalisations of Chiswell's Theorem that $0$-hyperbolic Lyndon length functions on groups always arise as based length functions of the the group acting isometrically on a tree. We produce counter-examples to show that this Theorem fails if one replaces $0$-hyperbolicity with $\delta$-hyperbolicity. 
	
	We then propose a set of axioms for the length function on a finitely generated group that ensures the function is bi-Lipschitz equivalent to a (or any) length function of the group acting on its Cayley graph. 
\end{abstract}

	\maketitle
	
	\section{Introduction}
	
	One of the key insights of geometric group theory is that one can obtain information on a group by viewing it as a metric space, via the word metric on its Cayley graph. More generally if a group, $G$, acts isometrically on a metric space $(X,d)$ one can elucidate properties of the group from this action. For instance, the class of hyperbolic groups is precisely the class of those groups admitting a proper, co-compact isometric action on some locally compact, geodesic $\delta$-hyperbolic space $X$. 
	
	Given a (right) isometric action of $G$ on $(X,d)$, and a point $p$ in $X$, one can define a $G$-invariant pseudo-metric - which we denote by $d_p$ - on $G$ via $d_p(g,h) := d(pg, ph)$, which is a metric precisely when the stabiliser of $p$ is trivial. In fact, this metric on $G$ can be encoded via the \textit{based length function}. 
	
	\begin{defn}
		\label{basedlength}
		Let $G$ act isometrically on the metric space $(X,d)$. Then the \textit{based length function} of $G$ based at some point, $p \in X$ is the function, $l_p:G \to \R$, given by: 
		$$
			l_p(g)  :=  d(p, pg) 
		$$
	\end{defn}
	
	It is straightforward to see that one can recover the invariant (pseudo) metric from the based length function via $d_p(g,h) = l_p(gh^{-1})$. 
	
	Of course, in order to obtain properties of the group it is helpful to impose conditions on the space and the action, just as for hyperbolicity above. A key area where one can recover a great deal of information about $G$ is when $X$ is a tree.

	The source of inspiration for this paper is a striking result of Chiswell, that one can axiomatise the based length functions arising from actions on trees  - sometimes called Lyndon length functions, following results from \cite{Lyndon1963} - and, from the axioms, always recover an isometric action. Specifically, 
	
	\begin{thm}[\cite{Chiswell1976}]
		\label{chiswell}
		Suppose $l:G \to \R_{\geq 0}$ satisfies the following axioms: 
			\begin{enumerate}[{A}1:]
			\item[{A}1':] $l(g)=0$ if  $g=1$
			\item[A2\phantom{'}:] $l(g^{-1}) = l(g)$
			\item[A3\phantom{'}:] $c(g,h) \geqq 0$
			\item[$H_0$\phantom{'}:] For all $g_1, g_2, g_3 \in G$, \\ $c(g_1,g_2)\geq m$, $c(g_2,g_3)\geq m$ implies that $c(g_1, g_3)\geq m$ \hspace{35pt},
		\end{enumerate}
		where 
		$$
		c(g,h) := \frac{1}{2}(l(g) + l(h) - l(gh^{-1})).
		$$
		Then there exists an $\R$-tree, $(X,d)$, admitting an isometric $G$-action and a point, $p \in X$, such that $l_p(g) = l(g)$. Moreover, if the images of $l$ and $c$ lie in $\Z$, then the tree will be simplicial.  
	\end{thm}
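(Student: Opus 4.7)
My plan is to construct the $\R$-tree $X$ directly from the length function, as a quotient of segments indexed by group elements with the gluing dictated by $c$.

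Form $Y := \bigsqcup_{g \in G} \{g\} \times [0, l(g)]$ and define $(g, s) \sim (h, t)$ iff $s = t$ and $s \leq c(g, h)$. Intuitively, $(g, s)$ is the point at distance $s$ from $p$ along the would-be geodesic from $p$ to $pg$, and two such points are identified until the geodesics diverge. Reflexivity follows from $c(g, g) = l(g)$ (via A1' and A2), symmetry from A2, and transitivity is precisely $H_0$. The bound $c(g, h) \leq \min(l(g), l(h))$, which makes the gluing compatible with segment endpoints, follows from the triangle inequality $l(ab) \leq l(a) + l(b)$ (which is A3 applied to $c(a, b^{-1})$, using A2). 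Set $X := Y/\sim$ and $p := [1, 0]$; note $[g, 0] = p$ for every $g$ since $c(g, 1) = 0$ by A1'.

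Equip $X$ with $d([g, s], [h, t]) := s + t - 2 \min(s, t, c(g, h))$. Well-definedness on classes follows from elementary manipulation (using A3 to guarantee non-negativity), and symmetry and non-degeneracy are immediate. The first substantive step is the triangle inequality: given three classes, $H_0$ supplies the ultrametric-type inequality $c(g_i, g_k) \geq \min(c(g_i, g_j), c(g_j, g_k))$, which by case analysis on the orderings of the three $c$-values and the three levels yields the triangle inequality for $d$.

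I would then show $(X, d)$ is an $\R$-tree. Between $[g, s]$ and $[h, t]$ I exhibit the path that travels down the $g$-segment from level $s$ to level $\min(s, t, c(g, h))$ and back up the $h$-segment to level $t$; this has length exactly $d([g,s],[h,t])$, so $X$ is geodesic. Every triangle in $X$ is then a tripod meeting at level $\min_{i \ne j} c(g_i, g_j)$, again by $H_0$, giving $0$-hyperbolicity and hence the $\R$-tree property.

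Finally I would define the $G$-action. On endpoints it is forced: $ph \cdot g := [hg, l(hg)]$, and on an interior point $[h, s]$ the element $g$ acts as the unique isometry carrying the segment from $p$ to $ph$ onto the segment from $pg$ to $phg$ preserving base-to-endpoint orientation. Expressing this in coordinates $(h', s')$, checking well-definedness on classes, isometry, and the composition law $(x \cdot g) \cdot h = x \cdot (gh)$, is where the main technical labour lies and is the step I expect to be the hardest — the tool at each stage is once more $H_0$, which governs how the Gromov products $c$ transform under base-change. Once the action is established, $l_p(g) = d(p, p \cdot g) = d([1, 0], [g, l(g)]) = l(g)$ is immediate. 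For the integer refinement, the subset of points $[g, n]$ with $n \in \Z_{\geq 0}$ is $G$-invariant and inherits the structure of a simplicial tree on which $G$ acts by simplicial automorphisms.
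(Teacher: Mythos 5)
The paper gives no proof of this statement --- it is quoted directly from Chiswell's 1976 paper --- and your construction (gluing the segments $\{g\}\times[0,l(g)]$ along the Gromov product, with $d([g,s],[h,t]) = s+t-2\min(s,t,c(g,h))$, the tripod analysis, and the induced isometric action) is exactly the classical argument of that reference, so your outline is correct and takes the same approach as the cited source. One small correction: the well-definedness of $d$ on equivalence classes is not mere ``elementary manipulation using A3'' but genuinely uses $H_0$ (one must show $c(g',h)=c(g,h)$ whenever $c(g,g')\geq s > c(g,h)$, which is the same ultrametric inequality you invoke later for the triangle inequality), and, as you acknowledge, the verification that the action is well defined, isometric and compositional is the substantive labour that a full write-up would have to supply.
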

	
\begin{rem}
		As noted above a function $d:G \times G \to \R$  can be defined from $l$ and, from this point of view, A1' says that $d$ vanishes on the diagonal, A2 says that it is symmetric and A3 says that it satisfies the triangle inequality. 
		
		The function $c(g,h)$ is then really the Gromov product and axiom $H_0$ should be thought of as a $0$-hyperbolicity condition (see, for example, \cite{Alonso1991} for a discussion on hyperbolic groups, spaces and the Gromov product). Chiswell's Theorem can then be summarised as saying that a $0$-hyperbolic Lyndon length function is always a based length function on a $0$-hyperbolic space.

\end{rem}	

%
%

With this in mind, we are motivated to ask the following questions. 

\begin{quest}
	
	\medspace
	
	\begin{itemize}
		\item Is there a generalisation of Chiswell's Theorem for isometric group actions on metric graphs?
		\item In particular, is there a generalisation of Chiswell's Theorem for isometric actions on $\delta$-hyperbolic graphs? 
	\end{itemize}
\end{quest}	

\begin{rem}
	In the spirit of Chiswell's result, we will consider graphs whose edge lengths may not be integers. For instance, one could take the Cayley graph of a group with respect to some generating set, and then equivariantly assign positive real lengths to edges.  
\end{rem}

It turns out that these questions are somehow too broad in their scope. Given a (strictly positive) length function on $G$ (see Definition~\ref{hyplength} for the definition of a length function) there is always a metric graph whose based length function is equal to this function: take the complete graph on $G$ where the edge between $g$ and $h$ has length $l(hg^{-1})$ - Lemma~\ref{complete}. The based length function on this graph, with respect to the basepoint $1$, is equal to $l$. However, this action is not particularly useful. 

In order to rule out this kind of example we will add some restrictions. 

\begin{quest}
	Let us suppose that $G$ is finitely generated and let us restrict ourselves to isometric, co-compact actions on locally compact graphs, $X$.  
	
	\medspace
	
	\begin{itemize}
		\item Given a (strictly positive) length function, $l$, does $G$ admit an isometric, co-compact action on a locally compact metric graph, $X$, such that $l=l_p$ for some $p \in X$?
		\item What if we add the hypothesis that $l$ is $\delta$-hyperbolic (see Definition~\ref{hyplength} for the definition of hyperbolicity)? 
	\end{itemize}
\end{quest}

It turns out that the answer to both of these questions is \textbf{no}. By Proposition~\ref{deltacounterexample}, there exists a $\delta$-hyperbolic length function which cannot arise as the based length function associated to any isometric, co-compact action on a locally compact graph. 

However, that example is bi-Lipschitz equivalent to a length function on a Cayley graph. (Note that, for a finitely generated groups, all based length functions on Cayley graphs with respect to finite generated sets are bi-Lipschitz equivalent). But one can also produce examples of $\delta$-hyperbolic length functions which are not bi-Lipschitz equivalent to any based length function on a Cayley graph, as in Proposition~\ref{logs}. In fact, \emph{every} finitely generated group admits a hyperbolic length function.

	\begin{restatable*}{thm}{badChiswell}
		\label{badChiswell}
	There exists a finitely generated group, $G$, with a hyperbolic length function, 	$l:G \to \R_{\geq 0}$ such that $l \neq l_p$ for any co-compact, metric $G$-graph.

	Moreover, \emph{any} finitely generated group admits a (free) hyperbolic length function. In particular, we can find an example of a group $G$ with a hyperbolic length function, $l$, which is not quasi-isometric to any based length function arising from an isometric action of $G$ on a geodesic and proper $\delta$-hyperbolic metric space.  
\end{restatable*}

%

This leads us to the following. 

\begin{quest}
	
	Suppose that $G$ is finitely generated. 
	
	
	\begin{itemize}
		\item Can one axiomatise those length functions which are bi-Lipschitz equivalent to some (and hence all) based length functions on a Cayley graph for $G$ (with respect to a finite generating set)? 
		\item Can we make these axioms apply to - for instance - any free $F_n$ action on a simplicial tree as well as Cayley graphs?
		\item Does this axiomatisation define a connected/contractible/finite dimensional subspace of $\R^G$ on which $\Aut(G)$ acts?
	\end{itemize}
	
\end{quest}

\begin{rem}
	We do come up with a axiom scheme, below, and we observe that these axioms hold for all sufficiently well behaved actions - see Proposition~\ref{generalspace} and Corollary~\ref{preA5} - and in particular to all points of Culler-Vogtmann space.
	
	The third question here arises from the fact that one key use of Chiswell's Theorem is in the study of group actions on trees, and the definition of the space of such actions which are then encoded via functions (usually the translation length function, which is related to the Lyndon length function). See \cite{Culler1986} for the seminal paper on the `Outer Space' of free actions on trees, encoded by length functions (amongst other things). 
	
	It is clear that the space of all length functions which are bi-Lipscitiz to one arising from a Cayley graph is a contractible space (because a linear combination of such functions is another such function). Therefore, this provides a contractible space on which $\Aut(G)$ acts. However, it is far too large and so one might hope that an axiomatisation could provide a more reasonable subspace. 
\end{rem}


With these questions in mind, we propose the following axioms for our length functions:

\begin{restatable*}{defn}{axioms}
	\label{axioms}
	Let $G$ be a group. We say that $l: G \to \R_{\geq 0}$ is a \emph{graph-like length function} if it satisfies the following axioms:
	\begin{enumerate}[{A}1:]
		\item $l(g)=0$ if and only if $g = 1$
		\item $l(g^{-1}) = l(g)$
		\item $c(g,h) \geqq 0$
		\item For all $R\geq 0$, the closed ball $B_R := \{g\in G \mid l(g) \leq R\}$ is finite
		\item There exists $0 \leq \epsilon < 1$ and $K>0$ such that, for any $g \in G$, if $l(g) > K$ then there exists an $x \in G$ with: 
		\begin{enumerate}[(i)]
			\item $0 < l(x) \leq K$, and 
			\item $c(gx^{-1}, x^{-1}) \leq \frac{\epsilon l(x)}{2}$. 
		\end{enumerate}
	\end{enumerate}
\end{restatable*}

\begin{rem}
	Here, the mysterious looking axiom A5 is encoding the fact that if one had a reasonable action on a graph, then one could approximate geodesics in the graph with uniform quasi-geodesics built from the translates of finitely many paths; it is morally a co-compactness condition expressed solely in terms of the length function.  In fact, we prove that this axiom holds for a fairly wide class of actions in Proposition~\ref{generalspace} and Corollary~\ref{preA5}. 
	
	We also note that if $G$ acts on its Cayley graph then one easily gets that the based length function satisfies these axioms with $K=1$ and $\epsilon = 0$. However, if once considers actions on graphs with more than one orbit of vertices, then one quickly discovers that the correct condition is A5(ii) with $\epsilon \neq 0$. Moreover, scaling the graph by a constant clearly changes the value of $K$. For these reasons, to allow these kinds of deformations, we consider these axioms for more general $K$ and $\epsilon$. 
\end{rem}

It turns out that this is indeed sufficient to prove the following: 

\begin{restatable*}{thm}{graphlike}
	Let $l: G \to \R_{\geq 0}$ be a graph-like length function on a group $G$. Then $l$ is bi-Lipschitz equivalent to some (and hence to all) based length function $l_p$ arising from a locally compact, co-compact, metric $G$-graph and with $\text{Stab}(p) = 1$.
\end{restatable*}

%

Note that in view of Theorem~\ref{badChiswell}, since any finitely generated group admits a hyperbolic length function, the extra axioms are clearly necessary.

\begin{rem}
	We should note that another length function one can extract from an action is the \textit{translation length function}, which has the advantage of not relying on a basepoint. This is the point of view of \cite{Culler1987}. An important result here, building on the work of \cite{Culler1987}, is that of \cite{Parry1991} which states that a translation length function (which is $0$-hyperbolic) always arises from an action on a tree. However, this builds crucially on Chiswell's Theorem~\ref{chiswell} so it seems reasonable to start with Lyndon length functions. 
\end{rem}


	\section{Preliminaries}

	We begin with some preliminary definitions and notation. Let $G$ be a group.
	
%
%
	
	\begin{defn}
		Given a metric, $d:G\times G \to \R_{\geq 0}$, on a group $G$ we say that $d$ is \emph{right-invariant} if $d(g_1 h, g_2 h) = d(g_1, g_2)$ for all $g_1, g_2, h \in G$. 
	\end{defn}
%
%
%
%


	\begin{defn}\label{hyplength}
		A map $l:G\to \R_{\geq 0}$ which satisfies the following axioms is called a \emph{length function}: 
		
			\begin{enumerate}[{A}1:]
			\item $l(g)=0$ if and only if $g=1$
			\item $l(g^{-1}) = l(g)$
			\item $c(g,h) \geqq 0$
		\end{enumerate}
		where
		\begin{align*}
			c(g,h) := \frac{1}{2}(l(g) + l(h) - l(gh^{-1}))
		\end{align*}
		is the \emph{Gromov product} of $g,h\in G$.

		If, in addition, $l$ satisfies
		\begin{enumerate}
			\item[H$_{\delta}$:] $c(g_1,g_2)\geq m$, $c(g_2,g_3)\geq m$ implies that $c(g_1, g_3)\geq m-\delta$ \hspace{35pt}
		\end{enumerate}
		for some $\delta \geq 0$, we say it is a \emph{$\delta$-hyperbolic} length function. The condition $H_{\d}$ is referred to as \emph{$\delta$-hyperbolicity}.
	\end{defn}

\begin{rem}
	Given a length function, it is easy to verify that $d(g,h) := l(gh^{-1})$ is a right-invariant metric on $G$. In particular, A3 is equivalent to the triangle inequality, which can be written as
	\begin{align*}
		l(gh^{-1}) \leqq l(g) + l(h)
	\end{align*}

	Also note that here we write axiom A1: $l(g)=0$ if and only if $g=1$ rather than A1': $l(g)=0$ if $g=1$. This is largely because we end up wanting to characterise those length functions which are bi-Lipschitz equivalent (or quasi-isometric) to those arising from Cayley graphs.  We will sometimes emphasise this by saying that the length function is \emph{free}.
\end{rem}

	
	\begin{defn}
		A \emph{metric graph} is a 1-dimensional CW-complex with a metric structure. A \emph{metric tree} is a metric graph in which any two vertices are connected by exactly one simple path. We always equip metric graphs with the path metric. 
	\end{defn}
	
	\begin{defn}
		By a \emph{metric $G$-graph}, we mean a metric graph $\G$ together with an isometric right action of $G$ on $\G$, sending vertices to vertices and edges to edges. 
		
		Since we think of our graphs as metric spaces, given a point $p$ in $\G$, we may invoke Definition~\ref{basedlength}; $l_p(g) = d_\G(p, p.g)$ is the based length function on $\G$, based at $p$.
	\end{defn}

	\section{Hyperbolicity, Length Functions and Counter-Examples}

		Given a length function, $l$, as in Definition~\ref{hyplength} - that is to say, given a metric on $G$ - one can always construct \textit{some} metric graph on which $G$ acts isometrically and such that $l = l_p$:

	\begin{lem}
		\label{complete}
		Let $l$ be a length function on the group, $G$, as in Definition~\ref{hyplength}. 
		
		Let $\G$ be the complete graph on vertex set $G$, where the length of the edge between $g$ and $h$ is set to $l(gh^{-1}) = l(hg^{-1})$. Then $G$ acts isometrically on $\G$ and $l$ is equal to the based length function on $\G$ - Definition~\ref{basedlength} -  based at the vertex $1$.  
	\end{lem}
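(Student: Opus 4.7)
The proof splits naturally into three verifications: the edge-length assignment is well defined, the right action of $G$ is an isometry of the resulting metric graph, and the based length function at the vertex $1$ recovers $l$.

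First I would check that the edge labelling is unambiguous and strictly positive. Axiom A2 gives $l(gh^{-1}) = l(hg^{-1})$ since $(gh^{-1})^{-1} = hg^{-1}$, so the length of the edge between $g$ and $h$ does not depend on which endpoint is named first. Axiom A1 (in its ``if and only if'' form from Definition~\ref{hyplength}) guarantees $l(gh^{-1}) > 0$ whenever $g \neq h$, so every edge has strictly positive length and $\G$ is a genuine metric graph.

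Next, define the right $G$-action by right multiplication on vertices, extended affinely across each edge. The edge from $g$ to $g'$ is sent to the edge from $gh$ to $g'h$, whose length is $l((gh)(g'h)^{-1}) = l(gg'^{-1})$, equal to the original edge length. Hence the action sends vertices to vertices, edges to edges, and is isometric.

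Finally, for the based length function I would show $d_\G(1, g) = l(g)$. The direct edge from $1$ to $g$ has length $l(g \cdot 1^{-1}) = l(g)$, giving $d_\G(1,g) \leq l(g)$. For the reverse inequality, A2 and A3 combine (substitute $h \mapsto y^{-1}$ in $l(gh^{-1}) \leq l(g) + l(h)$ and use $l(y^{-1}) = l(y)$) to yield the submultiplicativity $l(xy) \leq l(x) + l(y)$. For any edge-path $1 = g_0, g_1, \ldots, g_n = g$ in $\G$, the telescoping identity $(g_0 g_1^{-1})(g_1 g_2^{-1}) \cdots (g_{n-1} g_n^{-1}) = g^{-1}$, combined with iterated submultiplicativity and A2, yields $l(g) \leq \sum_{i=1}^n l(g_{i-1} g_i^{-1})$, which is exactly the total length of the edge-path. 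Any rectifiable path in $\G$ between two vertices can be replaced by an edge-path of no greater length, since traversing a partial edge only adds length compared to skipping it, so $d_\G(1, g) \geq l(g)$. Together these give $l_1(g) = d_\G(1, 1 \cdot g) = l(g)$. No step presents a real obstacle; the only mildly subtle point is the standard reduction of arbitrary graph paths to edge-paths, and everything else is a direct application of the length function axioms.
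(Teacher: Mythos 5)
Your proof is correct and complete; the paper actually states Lemma~\ref{complete} without giving any proof, treating it as routine. Your three-step verification (well-definedness and positivity of edge lengths via A1 and A2, invariance of edge lengths under right translation, and the lower bound $d_\Gamma(1,g)\geq l(g)$ via the telescoping product and iterated triangle inequality, together with the standard reduction of rectifiable paths to edge-paths) is exactly the argument the authors are implicitly relying on.
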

	
	However, this is not a very useful object and we will want to insist on some finiteness conditions; namely, co-compactness and (usually) local compactness.

	Since this work arose as an attempt to generalise the celebrated result of Chiswell, Theorem~\ref{chiswell}, it is a natural way to try to generalise that result by weakening 0-hyperbolicity to $\delta$-hyperbolicity and instead only expecting the action to be on a (hyperbolic) graph. It turns out that this doesn't work and we present two counter-examples, in Proposition~\ref{deltacounterexample} and Proposition~\ref{logs}.

	Before presenting the first example, it is worthwhile observing some examples of hyperbolic length functions which \textit{do} arise as the length function of a co-compact action on a graph. In these examples, we can take an existing length function and deform it slightly, but the following examples show that in doing so one might still end up with a length function arising from an action on a graph. 
	
	\begin{exs} For both of these examples, our group is the infinite cyclic group, $\Z$. 
		\begin{enumerate}[(i)]
			\item Given $0 \leq \epsilon < 1$, define: 
			$$ 
			\begin{array}{rcl}
				l(n) &  = &  \left\{ \begin{array}{cl}  1+\epsilon & \text{if  } n=\pm 1 \\ |n| & \text{otherwise} 	
				\end{array}	 \right.
			\end{array}
			$$
			
			One can verify that this is the length function of the Cayley Graph of $\Z$, with respect to the generating set $\{ 1, 2, 3\}$ where $1$ is given length $1 + \epsilon$, $2$ is given length $2$ and $3$ is given length $3$.

			\item Again,  given $0 \leq \epsilon < 1$, define:
			$$ 
			\begin{array}{rcl}
				l(n) &  = &  \left\{ \begin{array}{cl}  0 & \text{if  } n=0 \\ |n|+\epsilon & \text{otherwise}  \\ 
				\end{array}	 \right.
			\end{array}
			$$
			This is actually $0$-hyperbolic, and arises from a non-minimal action on a tree. More precisely, take a graph with two vertices, $u$ and $v$, and two edges, one of which is a loop of length 1 at $v$ and the other is an edge of length $\epsilon/2$ joining $u$ to $v$. The fundamental group of that graph is $\Z$ and the action on the universal cover gives our length function (with respect to any lift of $u$). 
		\end{enumerate}
	\end{exs}
	
Next we show how to deform the standard length function on $\Z$ so as to end up with something that does not arise from an action. 	

	In order to proceed, we need the following observation:

\begin{lem}
	\label{alphas}
	Let $\Gamma$ be a co-compact, metric $G$-graph and $p \in \Gamma$. Let  $l_p(g) = d_{\Gamma}(p, p.g)$ denote the based length function. Then there exist finitely many positive real numbers, $\alpha_1, \ldots, \alpha_k$ such that, for any $g \in G$, $l_p(g)$ belongs to the submonoid of the (additive) real numbers generated by the $\alpha_i$. 
	
	That is, for every $g$, there exist non-negative integers $n_i$ such that $l_p(g) = \sum n_i \alpha_i$.
\end{lem}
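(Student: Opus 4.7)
The plan is to exploit co-compactness of the $G$-action to reduce to a finite amount of combinatorial data. By co-compactness, there are only finitely many $G$-orbits of edges in $\Gamma$; pick orbit representatives $e_1, \ldots, e_m$ with lengths $\alpha_1, \ldots, \alpha_m > 0$ (these are well-defined since the action is isometric). These lengths, together with at most a couple of further numbers determined by the position of $p$, will form the desired generating set.

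First I would treat the case in which $p$ is a vertex. Since $G$ sends vertices to vertices, $p.g$ is also a vertex for every $g \in G$. A geodesic between two vertices of a metric graph is necessarily a concatenation of \emph{full} edges: the only entry/exit points of an edge are its two endpoints, and a geodesic cannot enter an edge partway and then retreat, since every sub-path of a geodesic is a geodesic and such a detour could always be shortened by truncation. Each traversed edge lies in some $G$-orbit and hence has length $\alpha_i$ for some $i$, so $l_p(g)$ is automatically a non-negative integer combination of the $\alpha_i$'s.

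For the general case, suppose $p$ lies in the interior of an edge $e_0$ in orbit $i_0$, at distance $\beta$ from one endpoint $u$ and $\gamma := \alpha_{i_0} - \beta$ from the other endpoint $v$. By equivariance, $p.g$ sits on $e_0.g$ at distance $\beta$ from $u.g$ and $\gamma$ from $v.g$. If $e_0.g \neq e_0$, every geodesic from $p$ to $p.g$ must leave $e_0$ through $u$ or $v$ and enter $e_0.g$ through $u.g$ or $v.g$, whence
\[
l_p(g) = \min_{\substack{x \in \{u, v\} \\ y \in \{u.g, v.g\}}} \bigl[\, d_{e_0}(p, x) + d_\Gamma(x, y) + d_{e_0.g}(y, p.g) \,\bigr],
\]
where the outer summands each equal $\beta$ or $\gamma$ and the middle term is a vertex-to-vertex distance already known to lie in the submonoid generated by the $\alpha_i$'s. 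If instead $e_0.g = e_0$ with $g$ non-trivial on $e_0$, then $g$ flips the edge, and a direct case analysis shows that $l_p(g)$ is the minimum of $|\beta - \gamma|$, $\alpha_{i_0}$, $2\beta + d^*(u,v)$ and $2\gamma + d^*(u,v)$, where $d^*(u,v)$ is the shortest length of a path from $u$ to $v$ avoiding $e_0$ (itself a non-negative integer combination of the $\alpha_i$'s by the vertex case). Adjoining $\beta$, $\gamma$, and, if non-zero, $\sigma := |\beta - \gamma|$ to the list then yields a finite generating set with the required property.

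The only real subtlety is the edge-flipping sub-case: $|\beta - \gamma|$ is in general not a non-negative integer combination of $\beta$ and $\gamma$ alone, so the extra generator $\sigma$ is genuinely needed. Once that case is isolated, everything else is bookkeeping — identifying a finite list of ``building-block'' lengths and checking that every geodesic from $p$ to $p.g$ decomposes as a non-negative integer combination of them.
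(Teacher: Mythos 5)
Your argument is correct, and its core — taking the $\alpha_i$ to be the finitely many edge lengths (finitely many by co-compactness plus isometry) and decomposing geodesics between vertices into full edges — is precisely the paper's proof, which consists of just those two sentences. Where you genuinely diverge is in treating a basepoint $p$ lying in the interior of an edge: the paper's proof silently ignores this case, and it is not vacuous. For instance, if $\Z/2$ acts on a single edge of length $1$ by flipping it and $p$ sits at distance $1/3$ from an endpoint, then $l_p$ takes the value $1/3$, which does not lie in the monoid generated by the edge length alone. Your adjunction of $\beta$, $\gamma$ and (when non-zero) $|\beta-\gamma|$, together with the first-exit/last-entry decomposition $d_{e_0}(p,x)+d_\Gamma(x,y)+d_{e_0.g}(y,p.g)$, repairs exactly this, so your proof establishes the lemma as stated (for arbitrary $p\in\Gamma$), whereas the paper's covers only vertex basepoints; this extra care costs you a page of case analysis but is genuinely needed, since the lemma is later applied to an arbitrary basepoint in Proposition~\ref{deltacounterexample}. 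One loose end you should tie up: you speak throughout of ``geodesics'', but the lemma does not assume local compactness, so a priori the distance between two vertices is only an infimum of path lengths. This costs nothing: every path between vertices can be replaced by an edge path of no greater length, edge-path lengths lie in the submonoid $M$ generated by the $\alpha_i$, and $M\cap[0,R]$ is finite for every $R$ since each $\alpha_i>0$, so the infimum is attained and lies in $M$. The same discreteness observation legitimises the finite minima in your interior-point formulas.
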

\begin{proof}
	We simply let the $\alpha_i$ be the lengths of the edges in $\Gamma$. Since the action is isometric and there are finitely many edge-orbits, it suffices to take only finitely many of them. 
\end{proof}
	
Now we are ready to show that a $\delta$-hyperbolic length function need not come from an action on a graph.

	\begin{prop}\label{deltacounterexample}
		For any $0 \leq \epsilon < 1$, the function $l_{\epsilon}: \Z\to \R_{\geq 0}$ defined by $l_{\epsilon}(n) = |n| + \epsilon^{|n|}$, for $n \neq 0$ and $l(0)=0$ is a hyperbolic length function. 
		
		For $\epsilon = 1/2$, this cannot be equal to any based length function arising from a co-compact, isometric action of $\Z$ on a metric graph.
	\end{prop}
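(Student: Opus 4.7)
The plan is to treat the two assertions in turn. Verifying that $l_\epsilon$ is a length function is a direct check: A1 and A2 are immediate. For A3, one rewrites the triangle inequality $l_\epsilon(a+b) \leqs l_\epsilon(a)+l_\epsilon(b)$ as
\begin{align*}
\bigl(|a|+|b|-|a+b|\bigr) + \bigl(\epsilon^{|a|}+\epsilon^{|b|}-\epsilon^{|a+b|}\bigr) \geqs 0
\end{align*}
and splits on the sign pattern of $a,b$. If $a,b$ have the same nonzero sign, the first bracket vanishes and one uses $\epsilon^{|a|+|b|} = \epsilon^{|a|}\epsilon^{|b|} \leqs \epsilon^{|a|}+\epsilon^{|b|}$ (since $\epsilon<1$); in the opposite-sign case the first bracket is at least $2$, which easily absorbs the perturbation (itself bounded in absolute value by $1$); the zero cases are trivial. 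For hyperbolicity, I would compare with the standard length $l_0(n) := |n|$, which is the based length function of $\Z$ acting on its Cayley graph, a tree, and hence $0$-hyperbolic. Since $|l_\epsilon(n) - l_0(n)| \leqs 1$ for all $n$, the associated Gromov products differ by at most $3/2$, and so $H_0$ for $c_0$ upgrades to $H_\delta$ for $l_\epsilon$ with, say, $\delta = 3$.

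The substantive claim is that $l_{1/2}$ cannot equal the based length function of any co-compact isometric $\Z$-action on a metric graph. I would argue by contradiction: suppose $l_{1/2} = l_p$ for such an action on a graph $\Gamma$. Lemma~\ref{alphas} then supplies positive reals $\alpha_1,\ldots,\alpha_k$ (the edge lengths) such that every value $l_{1/2}(n) = n + 2^{-n}$ is a non-negative integer combination of the $\alpha_i$, and therefore lies in the subgroup $H$ of $(\R,+)$ they generate.

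The main obstacle is extracting a uniform denominator bound from this abstract structural information. As a finitely generated torsion-free abelian group, $H$ is free abelian of finite rank. In particular, $H \cap \Q$ is a finitely generated subgroup of $(\Q,+)$, and any such subgroup is cyclic (it sits inside $(1/N)\Z$ for the common denominator $N$ of any generating set), say $H \cap \Q = \rho\Z$ with $\rho = p/q$ in lowest terms. Thus every rational element of $H$ has denominator dividing $q$. But $l_{1/2}(n) = n + 2^{-n}$ has denominator $2^n$ in lowest terms, which grows without bound in $n$. This contradiction completes the argument: co-compactness forces the length values into a finitely generated subgroup of $\R$ whose rational part has bounded denominators, whereas the dyadic perturbation $2^{-n}$ produces arbitrarily large denominators.
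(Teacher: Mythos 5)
Your argument is correct and follows essentially the same route as the paper: axioms A1--A3 and $H_\delta$ are checked by comparing $l_\epsilon$ with the standard length $l_0(n)=|n|$ (the paper gets the sharper constant $\delta=\tfrac{3\epsilon}{2}$ by noting $|l_\epsilon-l_0|\leq\epsilon$, but any uniform $\delta$ suffices), and the non-realisability of $l_{1/2}$ is derived from Lemma~\ref{alphas} via a bounded-denominator contradiction. The only difference is cosmetic: where the paper extracts the denominator bound by fixing a maximal $\Q$-linearly independent subset of the edge lengths and tracking coefficients, you pass to the generated subgroup $H\leq(\R,+)$ and invoke the fact that a finitely generated subgroup of $(\Q,+)$ is cyclic, which is a slightly cleaner packaging of the same step.
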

	\begin{proof}
		First we verify axioms A1 to A3 and H$_{\delta}$ from Definition~\ref{hyplength}. Note that for $\epsilon=0$, this is just the standard length function of $\Z$ acting on the line (which is $0$-hyperbolic). For each $l_{\epsilon}$ we define $c_{\epsilon}$ to be the corresponding Gromov product. Note that both $l_0$ and $c_0$ take values in $\Z$. 
		
		Observe that A1 and A2 are clear for all $\epsilon$ directly from the definition. To verify A3, notice that 
		$$l_0(n) \leq l_{\epsilon}(n) \leq l_0(n) + \epsilon, $$
		
		and hence that for any $n,m \in \Z$, 
		$$ c_{\epsilon}(n,m) \geq    c_0(n,m) - \epsilon/2.$$  
		
		Therefore, since $c_0$ takes values in $\Z$, and $\epsilon < 1$, the only values for which $ c_{\epsilon}(n,m)$ could be negative would be those where $c_0(n,m)=0$. Since, for positive integers $n,k$ we have that $c_0(n, n+k) = c_0(-n, -n-k) = n$, we see that $c_0(n,m)$ can only be zero if one of $n,m$ is zero or if one is positive and one is negative. We calculate: if $n,m$ are positive then
		$$
		c_{\epsilon}(0,n) = c_{\epsilon}(0,-n) = 0
		$$
		and,  
		$$
		c_{\epsilon}(n, -m) = \epsilon^n + \epsilon^m - \epsilon^{n+m} > 0. 
		$$
		This verifies A3. To verify H$_{\delta}$, note that the inequality $l_0(n) \leq l_{\epsilon}(n) \leq l_0(n) + \epsilon$ also gives us that $\epsilon + c_0(n,m) \geq c_{\epsilon}(n,m)$. Hence we get, for all $n,m$, 
		$$
		\epsilon + c_0(n,m) \geq c_{\epsilon}(n,m) \geq c_0(n,m) - \epsilon/2.
		$$
		
		But since $l_0$ is $0$-hyperbolic, this implies that $l_{\epsilon}$ is $\frac{3\e}{2}$-hyperbolic.
		
		\medspace
		
		To see that $l_{1/2}$ cannot arise as the length function coming from a co-compact metric $\Z$-graph, we invoke Lemma~\ref{alphas} and argue by contradiction. That is, suppose that $l_{1/2}$ arises from the action of $\Z$ on a co-compact metric graph, $\Gamma$. Then, by Lemma~\ref{alphas}, we have $\alpha_1, \ldots, \alpha_k$ such that for any $g \in G$, there exist positive integers, $n_1, \ldots, n_k$ with $l_{1/2}(g) = \sum_{i=1}^k n_i \alpha_i$. We now show that this is not possible. 
		
		Without loss of generality, by enlarging the set, we may assume that $\alpha_1=1$. Further, again without loss of generality, we may assume that $\alpha_1, \ldots, \alpha_r$ is a maximal, $\Q$-linearly independent subset of the $\alpha_i$. Thus for any $j > r$, $\alpha_j$ is a $\Q$-linear sum of $\alpha_1, \ldots, \alpha_r$. Fix such an expression for each $j$ (in fact, it is unique) and notice that the denominators in the coefficients of these expressions are bounded. In particular, this means that any expression $\sum_{i=1}^k n_i \alpha_i$, where the $n_i$ are integers can be re-written as an expression $\sum_{i=1}^r q_i \alpha_i$, where the $q_i$ are now rational, but with bounded denominator. In particular, this means that there exists an integer, $M$, such that for any $g \in \Z$, there exists integers $m_i$ such that, 
		$$
		l_{1/2}(g) = \frac{1}{M}\sum_{i=1}^r m_i \alpha_i. 
		$$
		
		However, notice that $l_{1/2}(g)$ are rational for every $g$, and the set $\alpha_1, \ldots, \alpha_r$ are $\Q$-linearly independent. Hence the $\Q$-linear independence forces $m_i=0$ for $i \geq 2$, and therefore, 
		$$
		l_{1/2}(g) = \frac{1}{M} m_1 \alpha_1 = \frac{1}{M}m_1. 
		$$
		
		This is clearly impossible, since the values of $l_{1/2}$ do not belong to the additive cyclic subgroup generated by a rational number. 
	\end{proof}
	
	\begin{rem}
		Note that the same proof shows that $l_{\epsilon}$  cannot be equal to any length function arising from a co-compact, isometric action of $\Z$ on a metric graph for any rational $\epsilon$. 
		
	\end{rem}


%

	
	
	The idea of Proposition~\ref{deltacounterexample} is that we started with a $0$-hyperbolic length function (which is the standard length function of $\Z$ acting on its Cayley graph) and deformed it slightly to obtain a length function that is $\delta$-hyperbolic but is not equal to any based length function coming from a co-compact graph. Naturally, since this is a small deformation we obtain a length function which is bi-Lipschitz equivalent to the original length function. We could also consider quasi-isometry.  
	
%
%
%
	
	\begin{defn}
		We say that two length functions $l_1, l_2$ on a group $G$ are \emph{quasi-isometric} if there exists $A \geq 1, B \geq 0$ such that, $\forall g\in G$, 
		$$
		\frac{1}{A} l_1(g) - B \leq l_2(g) \leq A l_1(g) + B.
		$$
		If, in addition, we can take $B=0$, we say that $l_1, l_2$ are bi-Lipschitz equivalent. 
	\end{defn}

We record a standard consequence of the Svarc-Milnor Lemma (see, for example, \cite{Bridson1999}, I.8.19): 

\begin{lem}\label{anynicegraph}
	Let $X,Y$ be co-compact, locally compact metric $G$-graphs. Then for all points $p\in X$, $q\in Y$ such that $\text{Stab}(p) = \text{Stab}(q) = 1$, the based length functions $l_p$ and $l_q$ are bi-Lipschitz equivalent.
\end{lem}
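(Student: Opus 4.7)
The plan is to sandwich both $l_p$ and $l_q$ bi-Lipschitz between a common word-length function $|\cdot|_S$ on $G$ for some finite generating set $S$; by transitivity of bi-Lipschitz equivalence, the lemma will then follow. Thus it suffices to prove, for either metric $G$-graph (say $X$) and any basepoint with trivial stabiliser, that $l_p$ is bi-Lipschitz to $|\cdot|_S$.

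First I would invoke the Svarc-Milnor lemma (\cite{Bridson1999}, I.8.19). Since $X$ is a locally compact metric graph carrying a cocompact isometric $G$-action by graph automorphisms, $X$ is locally finite, hence a complete, locally compact geodesic space, and therefore proper by the Hopf-Rinow theorem; the action is properly discontinuous because vertex stabilisers embed into finite automorphism groups of stars and only finitely many group elements can move any given compact set into itself. Svarc-Milnor then produces a finite generating set $S$ of $G$ (so that $G$ is in particular finitely generated) together with constants $A \geq 1$ and $B \geq 0$ such that
\[
    \tfrac{1}{A}\,|g|_S - B \;\leq\; l_p(g) \;\leq\; A\,|g|_S + B \qquad \forall\, g \in G.
\]

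The remaining task is to promote this quasi-isometric comparison to a genuine bi-Lipschitz one, using the hypothesis $\text{Stab}(p) = 1$. For the upper bound, note that $|g|_S \geq 1$ for every $g \neq 1$, so $l_p(g) \leq (A+B)\,|g|_S$. For the lower bound I would split into two regimes. If $|g|_S \geq 2AB$, the linear term dominates the additive constant and $l_p(g) \geq \tfrac{1}{2A}\,|g|_S$. For the finitely many $g \neq 1$ with $|g|_S < 2AB$, triviality of $\text{Stab}(p)$ forces $l_p(g) > 0$, so the minimum of $l_p(g)/|g|_S$ over this finite set is a positive constant. Taking the smaller slope yields $l_p(g) \geq c\,|g|_S$ for some $c > 0$ and all $g \neq 1$. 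The identical argument applied to $(Y,q)$ gives $l_q \asymp |\cdot|_S$, and transitivity closes the proof.

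The principal subtlety is verifying the Svarc-Milnor hypotheses: properness of $X$ and proper discontinuity of the action are not listed among the explicit assumptions, but both follow from combining local compactness, cocompactness, and the graph-automorphism nature of the action. Once that is in hand, the upgrade from quasi-isometry to bi-Lipschitz is essentially automatic, because a length function on $G$ arising from a proper action with trivial point-stabiliser is bounded below by a positive constant on $G \setminus \{1\}$ and has finite closed balls, so any additive quasi-isometry constant can be absorbed into a multiplicative one over the finitely many small elements.
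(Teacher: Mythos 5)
Your proposal is correct and follows exactly the route the paper intends: the paper gives no proof of this lemma, recording it only as ``a standard consequence of the Svarc-Milnor Lemma'' with a citation to \cite{Bridson1999}, I.8.19, and your argument (Svarc--Milnor to get a quasi-isometry with a word metric, then absorbing the additive constants using $|g|_S\geq 1$, $\mathrm{Stab}(p)=1$, and the finiteness of small balls) is the standard way to fill in that claim. The only point worth tightening is the verification of proper discontinuity: it is cleanest to note that local compactness plus co-compactness forces local finiteness and a positive lower bound on edge lengths, so any bounded set meets the orbit of $p$ in finitely many points, and $\mathrm{Stab}(p)=1$ then gives metric properness of the orbit map directly.
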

%
%
%
%
%

	Instead of seeking length functions on $G$ which are equal to the based length function of a suitable $G$-graph, we can instead seek $l:G\to \R_{\geq 0}$ which lies in the \emph{quasi-isometry class} of a suitable $G$-graph, ideally a Cayley graph for $G$. Our aim is then to produce axioms for a length function that make it quasi-isometric, or bi-Lipschitz equivalent to a based length function on a Cayley graph. 
	
	Even here it turns out that hyperbolicity is not sufficient.

\begin{prop}
	\label{logs}
	Let $G$ be a finitely generated group and let $|.|:G \to \R$ be the word metric with respect to some finite generating set. Define a function, $l:G \to \R$ by $l(g):=\log(|g|+1)$. Then this is a $\delta$-hyperbolic length function, for a uniform $\delta = \frac{1}{2} \log 32$. 
	
	When $G=\Z$ then $l$ is not quasi-isometric (and hence not bi-Lipschitz equivalent) to \emph{any} based length function on a geodesic and proper hyperbolic space - for an isometric action of $\Z$.
\end{prop}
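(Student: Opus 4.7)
My plan is to prove the two parts of the proposition separately. For the hyperbolic length function claim, axioms A1 and A2 are immediate from $|g|=0\Leftrightarrow g=1$ and the symmetry of the word metric, while A3 (the triangle inequality $l(gh^{-1})\le l(g)+l(h)$) reduces after exponentiating to $|gh^{-1}|+1\le (|g|+1)(|h|+1)$, which follows from $|gh^{-1}|\le|g|+|h|$ together with the elementary estimate $|g|+|h|+1\le(|g|+1)(|h|+1)$.

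For $H_\delta$ with $\delta=\tfrac{1}{2}\log 32$, I would write $A=|g_1|+1$, and similarly define $B,C,P,Q,R$, noting that $e^{2c(g_i,g_j)}$ is the corresponding ratio. The hyperbolicity condition reduces to
\[
\frac{AC}{R}\ \ge\ \frac{1}{32}\min\!\left(\frac{AB}{P},\ \frac{BC}{Q}\right).
\]
I would split on the size of $B$ relative to $\min(A,C)$. If $B\le 16\min(A,C)$, combine $R\le P+Q$ with the hypothesis-derived bounds $P\le AB/e^{2m}$ and $Q\le BC/e^{2m}$ to get $R\le B(A+C)/e^{2m}$, then use $AC/(A+C)\ge\min(A,C)/2$ to conclude $AC/R\ge e^{2m}\min(A,C)/(2B)\ge e^{2m}/32$. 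If instead $B>16\min(A,C)$ with WLOG $A=\min(A,C)$, the reverse triangle inequality forces $P\ge B-A>15B/16$, so $e^{2m}\le AB/P<16A/15$; meanwhile $R\le A+C\le 2C$ gives $AC/R\ge A/2$, hence $AC/R\ge (15/32)e^{2m}$. Either branch yields $\delta=\tfrac{1}{2}\log 32$.

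For the non-quasi-isometry statement I would argue by contradiction: assume $l$ is quasi-isometric to $l_p$ for an isometric $\Z$-action on a geodesic and proper hyperbolic space $X$, with generator $\gamma$, and invoke the standard trichotomy for isometries of proper Gromov hyperbolic spaces. The elliptic case yields a bounded orbit, contradicting $l(n)\to\infty$. The loxodromic case gives $l_p(n)\sim|n|\tau$ with $\tau>0$, which is linear and therefore incompatible with $l(n)\asymp\log|n|$ under any quasi-isometry.

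The parabolic case is the principal obstacle. In the prototypical example $\gamma\colon z\mapsto z+1$ acting on $\mathbb H^2$ with basepoint $i$, a direct computation gives $l_p(n)=\operatorname{arccosh}(1+n^2/2)\sim 2\log|n|$, and in fact $l_p(n)-2l(n)\to 0$, so $l_p$ is even bi-Lipschitz to $l$. Ruling out such parabolic actions as stated therefore appears to need a stronger hypothesis on the action; the most natural reading is that an implicit cocompactness assumption is intended, under which the Svarc--Milnor lemma forces $X$ to be quasi-isometric to $\R$ and $\gamma$ to be loxodromic, so the parabolic case never arises. My plan would be to proceed under that reading, noting explicitly at this step that without it the proposition as phrased admits the above counterexample.
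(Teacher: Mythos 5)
Your treatment of the first part is correct and follows essentially the same strategy as the paper: exponentiate the Gromov product and run a case analysis on the relative sizes of the word lengths. The paper splits on whether $\max\{|g|,|h|,|k|\}\ge 4\min\{|g|,|h|,|k|\}$, while you split on whether $B\le 16\min(A,C)$; both routes land on the same constant $\delta=\tfrac{1}{2}\log 32$, and yours is, if anything, slightly cleaner in that the second case uses only one of the two hypotheses. I checked your two branches ($R\le B(A+C)e^{-2m}$ together with $AC/(A+C)\ge\min(A,C)/2$ in the first; $P>15B/16$ and $R\le 2C$ in the second) and they are sound.

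On the second part you have put your finger on a genuine defect, and the gap is in the paper rather than in your write-up. The paper's entire argument for that claim is the assertion that for an isometric $\Z$-action on a locally compact hyperbolic space the based length function ``would have to be bounded, or quasi-isometric to a linear function''. That dichotomy covers only elliptic and loxodromic isometries and silently excludes parabolics, which certainly occur in proper geodesic hyperbolic spaces. Your example $z\mapsto z+1$ on $\mathbb{H}^2$ with basepoint $i$ gives $l_p(n)=\operatorname{arccosh}(1+n^2/2)$, and one checks $2l(n)-3\log 2\le l_p(n)\le 2l(n)+\log 2$ for $n\neq 0$, so $l_p$ is quasi-isometric (in fact bi-Lipschitz equivalent, since $l(n)\ge\log 2$ off the identity) to $l$. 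This is a valid counterexample to the second assertion of the proposition as literally stated, and the same issue propagates to the ``Moreover'' clause of Theorem~\ref{badChiswell}. Your proposed repair --- adding co-compactness (with properness), so that the Svarc--Milnor lemma forces $l_p(n)\asymp|n|$ and only the loxodromic case survives --- is the natural one and is consistent with the co-compact metric $G$-graph setting used in the first half of that theorem; the statement should be amended accordingly rather than proved as written.
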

\begin{proof} First we verify the axioms from Definition~\ref{hyplength}. 
	We immediately see that $l$ satisfies axioms A1 and A2. To see that A3 holds we observe that for all $g,h\in G$, $|g|+|h| \geq |gh^{-1}|$. Thus,
	\begin{align*}
		\log(|g| + 1) + \log(|h|=1) &= \log((|g|+1)(|h|+1))\\
		&= \log(|g||h| + |g| + |h| +1)\\
		& = \log(|g| + |h| +1)\\
		&\geq \log(|gh^{-1}| +1)\\
		\Rightarrow c(g,h) = \frac{1}{2} (l(g) + l(h) - l(gh^{-1})) &\geq 0
	\end{align*}
	Thus $l$ is a length function. 
	
	\medspace
	
	To see that the length function is $\delta$-hyperbolic, consider the function, 
	$$
	d(g,h) := e^{2c(g,h)} = \frac{(|g|+1)(|h|+1)}{|gh^{-1}|+1}, g,h \in G.
	$$
	It will be sufficient to show that there exists a $\delta \geq 0$ such that for any three group elements, $g,h,k$, and any $R \geq 0$,
	$$
	d(g,h) \geq e^{2R} \text{ and } d(h,k) \geq e^{2R} \implies d(g,k) \geq e^{2(R-\delta)}.
	$$
	
	To do this, first observe the following two inequalities:
	
	\begin{eqnarray}
		\label{one} |g| \geq 2|h| \implies 2(|h|+1) \geq d(g,h) \\
		\label{two} d(g,h) \geq \min \{ \frac{|g|+1}{2},  \frac{|h|+1}{2} \}.
	\end{eqnarray}
	
	To see that \ref{one} is true, simply observe that if $|g| \geq 2|h|$ then, 
	$$
	|gh^{-1}| + 1 \geq |g| - |h| + 1 \geq \frac{|g|}{2}+1 \geq \frac{|g|+1}{2},
	$$
	from which it follows that $2(|h|+1) \geq d(g,h)$.  	
	
	To see that \ref{two} is true, observe that if $|g| \geq |h|$ then, $|gh^{-1}|+1 \leq |g| + |h| + 1 \leq 2(|g| + 1)$, from which the desired inequality follows.
	
	To verify that our length function is hyperbolic, let us suppose that we have a triple of group elements, $g,h,k$, and a real number, $R \geq 0$ such that $d(g,h) \geq e^{2R} \text{ and } d(h,k) \geq e^{2R}$. 
	
	Our aim is to find a (uniform) $\delta >0$ such that $d(g,k) \geq e^{2(R-\delta)}$.

	We set $\Lambda = \max\{ |g|, |h|, |k|\}$ and $\lambda = \min\{ |g|, |h|, |k|\}$ the argument breaks into two cases now, depending on whether $\Lambda \geq 4 \lambda$, or $\Lambda < 4 \lambda$.
	
	\noindent
	case(i): $\Lambda \geq 4 \lambda$: 
	
	Without loss of generality, we will assume that $|g| \geq |k|$. In particular this implies, from Equation~\ref{two}, that $d(g,k) \geq \frac{|k|+1}{2}$. 

	Suppose first that $|h| \geq 2|k|$. Then from Equation~\ref{one}, $2(|k|+1) \geq d(h,k)$. Therefore, 
	$$
	d(g,k) \geq \frac{|k|+1}{2} \geq \frac{d(h,k)}{4} \geq \frac{e^{2R}}{4}, 
	$$
	as required (with $\delta=\log(2)$). (We haven't used the fact that $\Lambda \geq 4 \lambda$ yet).

%

	
	If instead we have that, $|h| < 2|k|$ then we must get that $|g| > 2|h|$, since $\Lambda \geq 4 \lambda$. 

	Hence equations \ref{one}, \ref{two} give us that
	$$
	d(g,k) \geq \frac{|k|+1}{2} > \frac{|h|+1}{4} \geq \frac{d(g,h)}{8} \geq \frac{e^{2R}}{8}, 
	$$
	as required (here with $\delta = \frac{1}{2} \log(8)$). 
	
	\noindent
	case(ii): $\Lambda < 4 \lambda$.
	
	Here, we invoke the triangle inequality to get that:
	$$
	|gk^{-1}|+1 \leq 2\max\{ |gh^{-1}|+1, |hk^{-1}|+1\}.
	$$
	Without loss of generality, we assume that $|gh^{-1}| \geq |hk^{-1}|$. Then,
	$$
	d(g,k) \geq \frac{(\lambda+1)^2}{2(|gh^{-1}|+1)} > \frac{(\Lambda+1)^2}{32(|gh^{-1}|+1)} \geq \frac{d(g,h)}{32} \geq \frac{e^{2R}}{32}.
	$$
	
	This completes the proof that our length function is $\delta$-hyperbolic (with $\delta = \frac{1}{2} \log(32)$ as the final and maximal estimate). 
	
	\medspace
	
	To finish, note that if $\Z$ were to act isometrically on a locally compact hyperbolic space, $X$, with based length function $l_p$, then either $l_p$ would have to be bounded, or quasi-isometric to a linear function. Since $\log(|n| +1)$ is neither, it is not quasi-isometric to such an $l_p$. 

\end{proof}

%

\badChiswell

\begin{proof}
	This is simply the content of Propositions~\ref{deltacounterexample} and \ref{logs}. 
\end{proof}

\section{Axioms for graph-like length functions}

We finally turn to positive results and produce an set of axioms that do result in length functions which are bi-Lipschitz equivalent to the based length function on a (or any) Cayley graph. 

Before introducing our axioms, we would like to demonstrate that they are \textit{reasonable}, to the extent that they arise naturally from group actions on fairly general yet well behaved spaces. So we consider the following, noting that the hypotheses on $X$ are satisfied by a locally finite metric graph equipped with the path metric and a co-compact group action.  

\begin{prop}
	\label{generalspace}
	Let $X$ be a geodesic metric space with a given basepoint, $p$. Suppose a group, $G$, acts on $X$ isometrically, and co-boundedly. Then there exist constants, $K>0$ and $0 <\epsilon_0 \leq 1$ such that for any $g \in G$ with $d(p, pg) \geq K$, there exists an $x \in G$ such that: 
	\begin{itemize}
		\item $0 < d(p, px) \leq K$ and, 
		\item $\epsilon_0 d(p, px) + d(px,pg) \leq d(p, pg)$
	\end{itemize} 
\end{prop}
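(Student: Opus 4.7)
The plan is to use co-boundedness to locate a group element $x$ whose orbit point $px$ tracks a geodesic from $p$ to $pg$ at roughly a fixed distance from $p$, so that $px$ serves as an approximate ``intermediate'' point on the way to $pg$. Co-boundedness of the action furnishes a bounded set whose $G$-translates cover $X$, equivalently (and more usefully in the presence of the basepoint $p$) a constant $R > 0$ such that the orbit $pG$ is $R$-dense in $X$: for every $y \in X$ there exists $g' \in G$ with $d(y, pg') \leq R$. I would then set $K := 4R$ and $\epsilon_0 := 1/3$; these are not optimal but they are clean and easy to verify.

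Given $g \in G$ with $L := d(p, pg) \geq K$, choose a geodesic $\gamma : [0, L] \to X$ from $p$ to $pg$ and put $q := \gamma(K/2)$, which is well-defined because $K/2 \leq L$. Applying $R$-density of the orbit at $q$, pick $x \in G$ with $d(q, px) \leq R$. I expect this single application of co-boundedness, at a carefully chosen point of the geodesic, to be the whole geometric content of the proof.

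For the first bullet, the triangle inequality gives $|d(p, px) - K/2| \leq d(q, px) \leq R$, so
\[
R \;=\; K/2 - R \;\leq\; d(p, px) \;\leq\; K/2 + R \;=\; 3R \;\leq\; K,
\]
which in particular forces $d(p, px) > 0$. For the second bullet, $q$ lies on the geodesic from $p$ to $pg$, so $d(q, pg) = L - K/2$, whence $d(px, pg) \leq d(px, q) + d(q, pg) \leq R + L - K/2 = L - R$. Combining with $d(p, px) \leq 3R$ and $\epsilon_0 = 1/3$,
\[
\epsilon_0\, d(p, px) + d(px, pg) \;\leq\; \tfrac{1}{3}\cdot 3R + (L - R) \;=\; L \;=\; d(p, pg),
\]
which is exactly the desired inequality.

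The argument has no real obstacle beyond bookkeeping: one just needs to choose $K$ large enough relative to $R$ so that $K/2 - R > 0$ (guaranteeing $d(p,px) > 0$) and then take $\epsilon_0 \leq (K/2 - R)/(K/2 + R)$ to close the second estimate. Any pair $(K, \epsilon_0)$ satisfying these inequalities works, and $K = 4R$, $\epsilon_0 = 1/3$ is a convenient choice. Morally, this is a standard ``tracking'' argument along a geodesic, and the content is entirely packaged in the passage from co-boundedness to the $R$-density of a single orbit.
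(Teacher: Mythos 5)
Your proof is correct and follows essentially the same route as the paper's: both pick the point at distance $K/2$ along a geodesic from $p$ to $pg$, approximate it by an orbit point via co-boundedness, and close the estimate with the triangle inequality, differing only in the choice of constants (you get $\epsilon_0 = 1/3$ with $K = 4R$; the paper gets $\epsilon_0 = 1/5$ with a covering ball of radius $K/3$). No issues.
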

\begin{proof}
	Recall that, 
	\begin{itemize}
		\item $X$ geodesic means that for any two points in $X$ there exists an isometry from a closed real interval to $X$ where the images of the endpoints are our given two points of $X$.
\item The action is co-bounded means there is a closed ball whose $G$ translates cover $X$. 
	\end{itemize}
	
	Since the action is co-bounded, there exists a closed ball centered at $p$, of radius $K/3$ say, whose $G$ translates cover $X$. Set $B=B_{K/3}(p)$ to be this ball. 
	
	Given $g \in G$ with $d(p,pg) \geq K$, let $q \in X$ be the point on a geodesic from $p$ to $pg$ such that $d(p,q) = K/2$. Since $q$ is on a geodesic we also have $d(p,pg) = d(p,q) + d(q, pg)$. 
	
	Now, since the translates of $B$ cover $X$, there exists some $x \in G$ such that $q \in Bx$. This implies that $d(q, px) \leq K/3$. 
	
	First note that $d(p,px) >0$ since, 
	$$
	d(p,px) \geq d(p,q) - d(px,q) \geq K/2 - K/3 = K/6 > 0. 
	$$

	Next note that, 
$$
d(p, px) \leq d(p, q) + d(q, px) \leq K/2 + K/3 = 5K/6.
$$

and also, 

$$
d(px, pg) \leq d(px, q) + d(q, pg) \leq K/3 + (d(p,pg) - K/2) = d(p,pg) - K/6.
$$

Putting these together we get that,  
$$
\frac{1}{5}d(p,px) +   d(px, pg) \leq K/6 + ( d(p,pg) - K/6) = d(p,pg).
$$
Hence we are done, with $\epsilon_0 = 1/5$. 
\end{proof}

\begin{cor}
	\label{preA5}
	With the same hypotheses as above, set: 
	\begin{itemize}
		\item $l_p(g) = d(p,pg)$ and, 
		\item $c_p(g,h) = \frac{1}{2}(l_p(g) + l_p(h) - l_p(gh^{-1}))$. 
	\end{itemize}
	Then there exists $0 \leq \epsilon < 1$ and $K>0$ such that, for any $g \in G$, if $l_p(g) > K$ then there exists an $x \in G$ with: 
	\begin{enumerate}[(i)]
		\item $0 < l_p(x) \leq K$, and 
		\item $c_p(gx^{-1}, x^{-1}) \leq \frac{\epsilon l_p(x)}{2}$. 
	\end{enumerate}
\end{cor}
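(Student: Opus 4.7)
The plan is to show that Corollary~\ref{preA5} is essentially a restatement of Proposition~\ref{generalspace} once one unwinds the definition of the Gromov product and uses the right-invariance of the metric coming from the isometric right action. So the proof will be a short translation.

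First I would apply Proposition~\ref{generalspace} directly to obtain constants $K>0$ and $0<\epsilon_0\leq 1$, and, for any $g\in G$ with $l_p(g)=d(p,pg)\geq K$ (strict inequality works too by shrinking $K$ slightly if needed), an element $x\in G$ with $0<d(p,px)\leq K$ and
\[
\epsilon_0 \, d(p,px) + d(px,pg)\leq d(p,pg).
\]
Set $\epsilon := 1-\epsilon_0$, so that $0\leq \epsilon<1$; the same $K$ will serve.

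The key step is then to rewrite $l_p(gx^{-1})$ in geometric terms. Since the action is by isometries on the right, $d(p,p\cdot gx^{-1})=d(p\cdot x,\, p\cdot gx^{-1}\cdot x)=d(px,pg)$. Hence
\[
l_p(gx^{-1})=d(px,pg),\qquad l_p(x^{-1})=l_p(x)=d(p,px).
\]
Expanding the Gromov product and using $gx^{-1}\cdot(x^{-1})^{-1}=g$, we get
\[
c_p(gx^{-1},x^{-1})=\tfrac{1}{2}\bigl(l_p(gx^{-1})+l_p(x)-l_p(g)\bigr)=\tfrac{1}{2}\bigl(d(px,pg)+d(p,px)-d(p,pg)\bigr).
\]

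Finally, the inequality supplied by Proposition~\ref{generalspace} rearranges to
\[
d(px,pg)-d(p,pg)\leq -\epsilon_0\, d(p,px)=(\epsilon-1)\,d(p,px),
\]
so that
\[
c_p(gx^{-1},x^{-1})\leq \tfrac{1}{2}\bigl((\epsilon-1)\,d(p,px)+d(p,px)\bigr)=\tfrac{\epsilon \,l_p(x)}{2},
\]
which is exactly condition (ii). Condition (i) is just the bound on $d(p,px)$ from Proposition~\ref{generalspace}. There is no real obstacle; the only point requiring care is the right-invariance conversion $l_p(gx^{-1})=d(px,pg)$, and the arithmetic identification $\epsilon = 1-\epsilon_0$.
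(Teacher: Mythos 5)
Your proposal is correct and follows essentially the same route as the paper: expand the Gromov product using $gx^{-1}\cdot(x^{-1})^{-1}=g$ and $l_p(x^{-1})=l_p(x)$, set $\epsilon=1-\epsilon_0$, and observe that condition (ii) rearranges to exactly the inequality of Proposition~\ref{generalspace}. The only cosmetic difference is that you make the right-invariance identity $l_p(gx^{-1})=d(px,pg)$ explicit (and note, unnecessarily, a possible shrinking of $K$, since $l_p(g)>K$ already implies $l_p(g)\geq K$); the paper leaves these as one-line remarks.
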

\begin{proof}
	Just set $\epsilon = 1 - \epsilon_0$ from Proposition~\ref{generalspace} since 
	$$
	\begin{array}{rclc}
		c_p(gx^{-1}, x^{-1}) & \leq &  \frac{\epsilon l_p(x)}{2} & \iff \\ \\
		l_p(gx^{-1}) + l_p(x) - l_p(g) & \leq & \epsilon l_p(x) & \iff \\ \\
		(1-\epsilon) l_p(x) + l_p(gx^{-1}) &  \leq & l_p(g) \\ 
	\end{array}
	$$
	and the last line is equivalent to the conclusion of Proposition~\ref{generalspace} (where we have also used the fact that $l_p(w) = l_p(w^{-1})$ for all $w$ which is just a consequence of the symmetry of the metric). 
\end{proof}

The idea is that a fairly general class of spaces and actions satisfy the equation given by Corollary~\ref{preA5} and hence we will add this as a axiom for our length functions. Therefore, we propose the following.

%
%

\axioms

\begin{rem}
	We note that A4 is really a statement about the action being properly discontinuous, especially in view of Proposition~\ref{sequence}, which says that in the presence of A5, A4 is equivalent to the statement that $B_K$ is finite. 
	
	In view of Proposition~\ref{generalspace} and Corollary~\ref{preA5} one should view A5 as a co-compactness condition; the challenge here was writing an axiom down which could be stated purely in terms of the length function.

	As noted in the introduction, for a standard Cayley graph, its based length function will satisfy these axioms with $K=1$ and $\epsilon = 0$. The A5 condition with $\epsilon=0$ is effectively saying that for every $g \in G$, there is an $x$ of length at most $K$, such that $xp$ lies on a geodesic from $p$ to $pg$.

	For an example of a group acting on a graph where $\epsilon \neq 0$, consider the free group of rank 2, $F_2$, realised as the fundamental group of a graph with two vertices, $u,v$, and three edges: an edge-loop at $u$, $E_u$, an edge-loop at $v$, $E_v$, and an edge from $u$ to $v$, $E_{uv}$. The action of $F_2$ on the universal cover, $T$, of this graph will induced a based length function which is graph-like, but not with $\epsilon =0$. 
	
	Namely, take a lift of, $\overline{u}$ of $u$, as the basepoint of $T$ and consider the orbit of $\overline{u}$ under the group elements corresponding to elements of the fundamental group of the form $g_n=E_{uv} E_v^n {E_{uv}}^{-1}$, for $n \in \Z$. Then  the geodesic from $\overline{u}$ to $\overline{u} g_n$ only meets the orbit of $\overline{u}$ at its endpoints. Hence this cannot satisfy the A5 condition with $\epsilon =0$ for any $K$.   
	
	In fact, it is straightforward to see that any free $F_n$ action on a metric tree - that is, any point in Culler Vogtmann space - satisfies the axioms above, with $K=1$ but not necessarily with $\epsilon=0$.  
\end{rem}

%




Let us start with the following preparatory results:

\begin{lem}\label{discreteness}
	A length function satisfying A4 is discrete.
\end{lem}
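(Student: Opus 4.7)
The plan is to unpack what ``discrete'' should mean in this setting: a length function $l : G \to \R_{\geq 0}$ is discrete when its image $l(G)$ is a discrete subset of $\R$, i.e. has no accumulation points, or equivalently every bounded subset of $l(G)$ is finite.

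With this formulation, the statement follows immediately from A4 and requires no clever input. First I would fix an arbitrary $R \geq 0$ and observe that the truncated image $l(G) \cap [0,R]$ is precisely $l(B_R)$, where $B_R = \{g \in G \mid l(g) \leq R\}$. Since A4 asserts that $B_R$ is finite, its image under $l$ is also finite. As this holds for every $R$, the set $l(G)$ has no bounded infinite subset, and in particular no accumulation point in $\R$. Thus $l(G)$ is discrete, as required. There is no genuine obstacle; the only small point to flag is the definitional one of choosing the right notion of discreteness, but the notion above is the only one compatible with the way the term is used in later parts of the paper (where one wants the values of $l$ to be separated so that quasi-geodesic/combinatorial arguments can be run).
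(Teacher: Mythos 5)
Your argument is correct and, at heart, is the same one-line application of A4 that the paper uses; the only divergence is in how you unpack ``discrete''. The paper defines a length function to be discrete when there exists $\mu > 0$ with $l(g) \geq \mu$ for every non-trivial $g$ (and its proof just takes $\mu$ to be the minimum non-zero value occurring in a small ball, which exists by A4 together with A1). Your notion --- that $l(G)$ is a discrete subset of $\R$, i.e.\ $l(G)\cap[0,R]=l(B_R)$ is finite for every $R$ --- is strictly stronger in general, and it does imply the paper's version: for any $R>0$ the finite set $l(B_R)\setminus\{0\}$ has a positive minimum (or is empty), so $\mu = \min$ of that value and $R$ works. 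One small correction: your closing claim that topological discreteness of the image is ``the only notion compatible'' with later use is not quite right --- Lemma~\ref{generatingball} and Proposition~\ref{sequence} only ever invoke the weaker property, namely the existence of a uniform positive lower bound $\mu$ on the lengths of non-trivial elements (of $B_K$). Since your conclusion subsumes that, the proof stands.
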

\begin{proof}
	Recall that we say a length function $l:G\to \R_{\geq 0}$ is discrete if there exists $\mu > 0$ such that, for all non-trivial $g\in G$, $l(g)\geq \mu$.
	
	If $G = 1$, then $l$ is immediately discrete. Otherwise, take $\mu = \min\{R>0\mid B_R\neq 1\}$. Since A4 holds, this this minimum will be realised by some $R >0$.
\end{proof}

\begin{lem}
	\label{lambdashort}
	Given $l$ satisfying A5, set $\lambda = \frac{1}{1-\epsilon}$. Then for the $g,x$ listed in A5, we have that: 
	$$
	l(gx^{-1}) \leq l(g) - \frac{1}{\lambda}l(x). 
	$$
\end{lem}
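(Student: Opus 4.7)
The plan is to simply unpack axiom A5(ii) using the definition of the Gromov product. The hypothesis $c(gx^{-1}, x^{-1}) \leq \frac{\epsilon l(x)}{2}$ is a statement about certain lengths, and with a single application of A2 it becomes the inequality we want after basic algebra.

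First I would expand the Gromov product: by definition,
$$c(gx^{-1}, x^{-1}) = \tfrac{1}{2}\bigl(l(gx^{-1}) + l(x^{-1}) - l(gx^{-1}(x^{-1})^{-1})\bigr) = \tfrac{1}{2}\bigl(l(gx^{-1}) + l(x^{-1}) - l(g)\bigr).$$
Then I would use A2 to replace $l(x^{-1})$ by $l(x)$, obtaining
$$c(gx^{-1}, x^{-1}) = \tfrac{1}{2}\bigl(l(gx^{-1}) + l(x) - l(g)\bigr).$$

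Next I would combine this with A5(ii), which gives $l(gx^{-1}) + l(x) - l(g) \leq \epsilon l(x)$, and rearrange to find
$$l(gx^{-1}) \leq l(g) - (1-\epsilon)l(x) = l(g) - \tfrac{1}{\lambda}l(x),$$
using the definition $\lambda = \frac{1}{1-\epsilon}$. This is exactly the required inequality.

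There is no real obstacle here; the lemma is essentially a restatement of A5(ii) in a form that emphasises the length-shortening effect of multiplication by $x^{-1}$, which is the way the axiom will be used in subsequent arguments (for instance, to iterate and conclude that one can reduce the length of any element by at least a fixed fraction of the short element $l(x)$, giving eventual termination of a greedy shortening procedure).
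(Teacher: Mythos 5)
Your proof is correct and follows essentially the same route as the paper: expand the Gromov product in A5(ii), apply A2 to replace $l(x^{-1})$ by $l(x)$, and rearrange to get $l(gx^{-1}) \leq l(g) - (1-\epsilon)l(x) = l(g) - \frac{1}{\lambda}l(x)$. In fact your expansion $c(gx^{-1},x^{-1}) = \tfrac{1}{2}\bigl(l(gx^{-1}) + l(x) - l(g)\bigr)$ is the correct one, whereas the paper's displayed computation transposes $l(g)$ and $l(gx^{-1})$ (and has a stray sign in the penultimate line) before arriving at the same, correct, conclusion.
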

\begin{proof}
	By A5,
	\begin{align*}
		c(gx^{-1}, x^{-1}) &\leq \frac{\epsilon l(x)}{2}\\
		\Rightarrow \frac{1}{2}(l(g) + l(x) - l(gx^{-1})) &\leq \frac{\epsilon l(x)}{2}\\
		\Rightarrow l(g) + l(x) - l(gx^{-1}) &\leq \epsilon l(x)\\
		\Rightarrow l(gx^{-1}) &\leq l(g) + (1-\epsilon) l(x)\\
		\Rightarrow l(gx^{-1}) &\leq l(g) - \frac{1}{\lambda}l(x)
	\end{align*}
	
\end{proof}

\begin{lem}\label{generatingball}
	The ball $B_K = \{g\in G \mid l(g) \leq K\}$ is a generating set for $G$. In particular, by A4, $G$ is finitely generated.
\end{lem}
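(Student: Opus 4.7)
The plan is to prove that $B_K$ generates $G$ by iteratively applying axiom A5 to shrink any given element down to one of length at most $K$, and then to deduce finite generation from A4.

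First I would fix $g \in G$ with $l(g) > K$ (the case $l(g) \leq K$ being immediate since then $g \in B_K$), and invoke A5 to produce $x_1 \in G$ with $0 < l(x_1) \leq K$ and $c(g x_1^{-1}, x_1^{-1}) \leq \epsilon l(x_1)/2$. By Lemma~\ref{lambdashort} this yields
\[
l(g x_1^{-1}) \;\leq\; l(g) - \tfrac{1}{\lambda} l(x_1), \qquad \lambda = \tfrac{1}{1-\epsilon}.
\]
The key observation, and the reason A4 is needed at this point, is that Lemma~\ref{discreteness} provides a uniform $\mu > 0$ with $l(x) \geq \mu$ for every non-trivial $x \in G$. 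Consequently the length drops by at least $\mu/\lambda$, a fixed positive amount independent of the particular $g$.

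Then I would iterate: if $l(g x_1^{-1}) > K$, apply A5 to $g x_1^{-1}$ to obtain $x_2 \in B_K \setminus \{1\}$, and continue, producing a sequence $g_n := g x_1^{-1} x_2^{-1} \cdots x_n^{-1}$ with $l(g_n) \leq l(g) - n\mu/\lambda$. After at most $\lceil \lambda(l(g) - K)/\mu \rceil$ steps we must have $l(g_n) \leq K$, i.e.\ $g_n \in B_K$. Rearranging gives
\[
g \;=\; g_n \, x_n \, x_{n-1} \cdots x_1,
\]
which exhibits $g$ as a product of elements of $B_K$ (noting that by A2 inverses of elements of $B_K$ are again in $B_K$, though this is not even needed here). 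The \emph{in particular} clause is then immediate: A4 asserts that $B_K$ is finite, so $G$ is generated by a finite set.

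The only real obstacle is ensuring that the iteration terminates. Axiom A5 on its own only guarantees $l(x_i) > 0$, which by itself produces a strictly decreasing sequence of lengths but leaves open the possibility that the decrements $l(x_i)/\lambda$ tend to zero. It is precisely the discreteness consequence of A4, packaged in Lemma~\ref{discreteness}, that rules this out and forces the process to stop in finitely many steps. Once that uniform lower bound $\mu$ is in hand, the remainder of the argument is a routine telescoping.
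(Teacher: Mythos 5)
Your proposal is correct and follows essentially the same route as the paper: the paper also combines Lemma~\ref{lambdashort} with the uniform lower bound $\mu$ from Lemma~\ref{discreteness} to show the length drops by at least $\mu/\lambda$ at each application of A5, merely packaging the descent as an induction on $n$ over the thresholds $K + n\mu/\lambda$ rather than as an explicit terminating iteration. Your closing remark correctly identifies the one genuine subtlety, namely that discreteness (via A4) is what forces termination.
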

\begin{proof}
	We will show, by induction on $n$, that $\langle B_K \rangle$ contains all group elements $g$ with 
	$$
	l(g)\leq K + \frac{n \mu}{\lambda}
	$$
	(taking $\lambda$ from Lemma~\ref{lambdashort} and $\mu$ from Lemma~\ref{discreteness}) and hence contains all of $G$.
	
	Firstly, take $g\in G$ such that $l(g) \leq K$. Then $g\in B_K \in \langle B_K \rangle$, and we are done.
	
	Now assume that, for all $g\in G$ satisfying $l(g)\leq K + \frac{(n - 1)\mu}{\lambda}$, $g$ lies in $\langle B_K \rangle$.
	
	Take $g$ such that $l(g)\leq K + \frac{n \mu}{\lambda}$. Then, by Lemma~\ref{lambdashort}, there exists $x\in B_k$ such that
	\begin{align*}
		l(gx^{-1}) &\leq l(g) - \frac{1}{\lambda} l(x)\\
		&\leq K + \frac{n \mu}{\lambda} - \frac{\mu}{\lambda} &&\text{(by properties of $g$ and Lemma~\ref{discreteness})}\\
		& = K + \frac{(n - 1)\mu}{\lambda}
	\end{align*}
	Thus $gx^{-1}\in \langle B_K \rangle$, and since $x\in B_K$, this means that $g = gx^{-1}x\in  \langle B_K \rangle$.	
\end{proof}

\begin{prop}\label{sequence}
	Let $l: G \to \R_{\geq 0}$ satisfy A1, A2, A3 and A5. Let $K, \epsilon$ be as in A5, let $\lambda = \frac{1}{1 - \epsilon}$, and suppose that the ball $B_K = \{g\in G \mid l(g) \leq K\}$ is finite. Then,
	\begin{enumerate}[(a)]
		\item For any nontrivial $g \in G$, there exists a finite sequence, $x_0, \ldots, x_k$ such that: 
		\begin{enumerate}[(i)]
			\item Each $0<l(x_i) \leq K$ (i.e. each $x_i\in B_K\setminus \{1\}$),
			\item $l(g {x_0}^{-1} {x_1}^{-1} \ldots {x_k}^{-1}) = 0$, and 
			\item $\frac{1}{\lambda} \sum_{i=0}^k l(x_i) \leq l(g) \leq \sum_{i=0}^k l(x_i)$. 
		\end{enumerate}
	
		\item Axiom A4 holds - that is to say, the ball $B_R$ is finite for all $R\geq 0$.
	\end{enumerate}

\end{prop}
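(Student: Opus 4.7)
My plan is to iterate axiom A5 and apply Lemma~\ref{lambdashort} at each step to guarantee that the length of the running group element decreases by a fixed positive amount. Set $\mu := \min\{l(x) : x \in B_K,\ x \neq 1\}$; by A1 every nontrivial element has positive length, so this minimum is attained and strictly positive because $B_K$ is finite. (The degenerate case $B_K = \{1\}$ forces, via A5 applied to any hypothetical $g$ with $l(g) > K$, that no such $g$ exists, whence $G = \{1\}$ and both (a) and (b) are vacuous.) I also note that $\lambda = 1/(1-\epsilon) \geq 1$, which will matter in one place below.

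For part (a), fix a nontrivial $g \in G$ and set $g_0 := g$. While $l(g_i) > K$, apply A5 to $g_i$ to produce $x_i \in B_K \setminus \{1\}$ with $c(g_i x_i^{-1}, x_i^{-1}) \leq \epsilon l(x_i)/2$, and set $g_{i+1} := g_i x_i^{-1}$. Lemma~\ref{lambdashort} gives $l(g_{i+1}) \leq l(g_i) - l(x_i)/\lambda \leq l(g_i) - \mu/\lambda$, so the iteration halts in at most $\lceil (l(g) - K)\lambda/\mu \rceil$ steps at some index $k$ with $l(g_k) \leq K$. If $g_k = 1$ the sequence $x_0, \ldots, x_{k-1}$ already meets (i) and (ii); otherwise append $x_k := g_k \in B_K \setminus \{1\}$. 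The upper bound in (iii) is immediate from $g = x_0 x_1 \cdots x_k$ and repeated triangle inequality (combining A3 with A2 to pass from $l(uv^{-1}) \leq l(u)+l(v)$ to $l(uv) \leq l(u)+l(v)$). For the lower bound, I telescope $l(g_{i+1}) \leq l(g_i) - l(x_i)/\lambda$ over $0 \leq i < k$ to obtain $l(g_k) \leq l(g) - \frac{1}{\lambda}\sum_{i=0}^{k-1} l(x_i)$; since $l(g_k) = l(x_k)$ in the appended case (and $0$ otherwise), and since $\lambda \geq 1$ yields $l(x_k)/\lambda \leq l(x_k)$, rearrangement gives $\frac{1}{\lambda}\sum_{i=0}^{k} l(x_i) \leq l(g)$ uniformly.

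For part (b), fix $R \geq 0$ and let $g \in B_R$. Part (a) expresses $g$ as a product of at most $k+1$ elements of $B_K \setminus \{1\}$, and the lower bound $(k+1)\mu \leq \sum_i l(x_i) \leq \lambda R$ from (iii) gives $k+1 \leq \lambda R/\mu$. Hence every element of $B_R$ is a product of at most $\lceil \lambda R/\mu\rceil$ terms drawn from the finite set $B_K$, so $B_R$ is finite. The only real obstacle is guaranteeing termination in part (a), which reduces to producing the uniform positive lower bound $\mu$ on lengths of nontrivial elements of $B_K$; once that is in hand, everything else is telescoping and counting.
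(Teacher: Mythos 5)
Your proof is correct and follows essentially the same route as the paper's: both repeatedly apply A5 together with Lemma~\ref{lambdashort} to strip off an element of $B_K\setminus\{1\}$, using the minimum length $\mu>0$ (guaranteed by the finiteness of $B_K$) to force termination, and then deduce (b) by bounding the number of factors by $\lambda R/\mu$. The only cosmetic differences are that you phrase the descent as an explicit terminating iteration rather than the paper's induction on $n$ with threshold $K+\frac{n\mu}{\lambda}$, and that the factors actually multiply as $g=x_k\cdots x_0$ rather than $x_0\cdots x_1\cdots x_k$, which does not affect any of your estimates.
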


\begin{proof}
	Part \emph{(a)} is clearly true if $l(g) \leq K$, since we can just take $g=x_0$. To prove it in general, we use the discreteness of the length function to argue by induction. More precisely, we let $P_n$ be the statement that \emph{(a)} holds for all $g$ with $l(g) \leq K + \frac{n\mu}{\lambda}$. Thus our initial observation is that $P_0$ holds. We also observe that, since $B_K$ is finite, there exists a minimum length, $\mu >0$, for elements in $B_K$.
	
	Suppose then that $P_{n-1}$ holds and consider a $g \in G$ with $l(g) \leq  K + \frac{n\mu}{ \lambda}$. If $l(g) \leq K$ then we are done, as above. Otherwise, by Lemma~\ref{lambdashort} and the existence of $\mu$, 
	there exists an $x \in G$ with $0<l(x) \leq K$ and 
	
	\begin{equation}
		\label{inequality}
		l(gx^{-1}) \leq l(g) - \frac{1}{\lambda}l(x) \leq K + \frac{(n-1)\mu}{\lambda}. 	
	\end{equation}

	Now by the induction hypothesis applied to $g_0 = gx^{-1}$ we can find $x_1, \ldots, x_k \in G$ such that 
	
	\begin{enumerate}[(i)]
		\item Each $0< l(x_i) \leq K$ 
		\item $l(g_0 {x_1}^{-1} {x_2}^{-1} \ldots {x_k}^{-1}) = 0$, and 
		\item $  \frac{1}{\lambda} \sum_{i=1}^k l(x_i) \leq l(g_0) \leq \sum_{i=1}^k l(x_i)$. 
	\end{enumerate}
	
	Now set $x=x_0$. Then, by Equation~\ref{inequality}, we have that, 
	
	$$
	\frac{1}{\lambda} \sum_{i=0}^k l(x_i) = \frac{1}{\lambda}l(x) + \frac{1}{\lambda} \sum_{i=1}^k l(x_i) \leq \frac{1}{\lambda} l(x) + l(g_0) \leq l(g).  
	$$
	
	Moreover, by A3, 
	$$
	l(g) \leq l(x) + l(g_0) \leq  \sum_{i=0}^k l(x_i). 
	$$
	
	Hence, by induction, \emph{(a)} result holds for all $g$.

	We now prove \emph{(b)}. Let $M\geq 0$, and let $S_M$ denote the following set of finite sequences:
	$$
	S_M = \biggl\{x_0, \ldots x_k \in B_K\setminus \{1\} \mid  \sum_{i=1}^k l(x_i)\leq M \biggr\}
	$$
	Let $R\geq 0$, and let $g\in B_R$. By \emph{(a)}, there exists a sequence $x_0, \ldots x_k\in S_{R\lambda}$ such that $g = x_k\ldots x_0$. Therefore, if we can prove that $S_{R\lambda}$ is finite, we will prove that $B_R$ is finite.
	
	Recall that, for all $x\in B_K$, $l(x)\geq \mu$. Therefore, for all sequences in $S_{R\lambda}$,
	$$
	R\lambda \geq \sum_{i=1}^k l(x_i) \geq k\mu \Rightarrow k\leq \frac{R\lambda}{\mu}
	$$
	Thus $S_{R\lambda}$ is a set of sequences of elements from the finite set $B_K$, and the length $k$ of these sequences has an upper bound. Thus $S_{R\lambda}$ is finite, and we are done.
\end{proof}



%

\graphlike

\begin{proof}
	We take $\G$ to be the Cayley graph on the set $B_K = \{g\in G \mid l(g) \leq K\}$, but instead of assigning every edge length 1, we assign it the length of the corresponding generating element under $l$. That is, the vertex set of $\G$ is $G$, and we join two vertices, $g,h$ by an edge if and only if $gh^{-1} =y \in B_K$; in that case we assign that edge a length of $l(y)$. (Note that $hg^{-1} = y^{-1}$ will also be in $B_K$ in that case and have the same length). $\G$ is then equipped with the path metric.

	We then take the base point $p$ to be the vertex corresponding to the identity. By Lemma~\ref{generatingball}, $B_K$ is a finite generating set for $G$; hence $\G$ is well-defined and the action of $G$ is co-compact.
	
	We can immediately see that $0=l(1) = l_p(1)$, so we shall restrict our attention to nontrivial $g\in G$. For $g\in G$ we write, as always, $l_p(g) = d_{\G}(p, p\cdot g)$ to denote the based length induced by $\G$. The metric on $\G$ is the path metric, and so the distance from $p$ to $pg$ is the infimum of the lengths of all edge paths from $p$ to $p\cdot g$. Thus for all $g\neq 1$,
	\begin{align*}
		l_p(g) &= \inf \big\{ \sum_{i=0}^k l_p(y_i) \mid y_0, \ldots, y_k\in B_K, y_k\ldots y_0 = g \big\}\\
		&= \inf \big\{ \sum_{i=0}^k l(y_i) \mid y_0, \ldots, y_k\in B_K, y_k\ldots y_0 = g \big\}
	\end{align*}
	where the second equality arises from the fact that $l_p(y) = l(y)$ for all $y \in B_K$, as these are the edges of $\G$. 

	By Proposition~\ref{sequence} \emph{(a)}, there exists a sequence $x_0, \ldots, x_k\in B_K$ such that $x_k\ldots x_0 = g$ and $\frac{1}{\lambda} \sum_{i=0}^k l(x_i) \leq l(g)$, where $\lambda = \frac{1}{1-\epsilon}$. Thus $\frac{1}{\lambda} l_p(g) \leq l(g)$.
	
	Conversely, by inductively applying A3, the triangle inequality, $l(g)\leq \sum_{i=0}^k l(y_i) = \sum_{i=0}^k l_p(y_i) $ for all sequences $y_0, \ldots, y_k \in B_K$ with $y_k\ldots y_0 = g$. Thus $l(g) \leq l_p(g)$.
	
	We have $\frac{1}{\lambda} l_p(g) \leq l(g) \leq $ $l_p(g)$, hence $l_p$  is bi-Lipschitz equivalent to $l$ with bi-Lipschitz constant $\lambda = \frac{1}{1-\epsilon}$.

	Therefore, by Corollary~\ref{anynicegraph}, $l$ lies in the bi-Lipschitz equivalence class of all based length functions arising from free, locally compact, co-compact metric $G$-graphs.
\end{proof}

%
%

\begin{rem}
	A hyperbolic graph-like length function is a length function that satisfies the axioms from Definition~\ref{axioms} as well as the $H_{\delta}$ axiom from Definition~\ref{hyplength} for some $\delta > 0$. 
\end{rem}


\newcommand{\etalchar}[1]{$^{#1}$}

\end{document}